\documentclass{article}

\usepackage{amsfonts}
\usepackage{amsmath}
\usepackage{amsthm}
\usepackage{mathrsfs}

\DeclareMathOperator{\row}{Row}
\DeclareMathOperator{\rowmax}{row-max}
\DeclareMathOperator{\rowargmax}{row-argmax}

\DeclareMathOperator{\colmin}{col-min}
\DeclareMathOperator{\colargmin}{col-argmin}
\DeclareMathOperator{\argmin}{argmin}
\DeclareMathOperator{\argmax}{argmax}
\DeclareMathOperator{\col}{Col}

\DeclareMathOperator{\floor}{Floor}
\DeclareMathOperator{\ceil}{Ceil}

\newcounter{mscount}

%%%%%%%%%%%%%%%%%%%%%%%%%%
%%% COMPLEXITY CLASSES %%%
%%%%%%%%%%%%%%%%%%%%%%%%%%

%%%%%%%%%%%%%%%%%%%%%%%%%
%%% DECISION PROBLEMS %%%
%%%%%%%%%%%%%%%%%%%%%%%%%

%%%%%%%%%%%%%%%%%%%%%%%%%
%%% LOGICAL LANGUAGES %%%
%%%%%%%%%%%%%%%%%%%%%%%%%

%\newcommand{\ml}{\llanguage{ML}}
%\newcommand{\folml}{\llanguage{FOLML}}
%\newcommand{\ifml}{\llanguage{IFML}}

%%%%%%%%%%%%%%%%%%%%%%%%%%
%%% LOGICAL STRUCTURES %%%
%%%%%%%%%%%%%%%%%%%%%%%%%%

\newcommand{\str}[1]{\mathbb{#1}}

%%%%%%%%%%%%%%%%%%%%
%%% SATISFACTION %%%
%%%%%%%%%%%%%%%%%%%%

%%%%%%%%%%%%%%%%%%%%%
%%% MISCELLANEOUS %%%
%%%%%%%%%%%%%%%%%%%%%

%\newcommand{\subf}{\mathit{Subf}}
%\newcommand{\subst}{\mathit{Subst}}
%\newcommand{\free}{\mathit{Free}}
%\newcommand{\st}{\mathit{ST}}
%\newcommand{\sk}{\mathit{Sk}}
%\newcommand{\kr}{\mathit{Kr}}
%\newcommand{\tr}{\mathit{Tr}}

% It's better to use \DeclareMathOperator{\command}{output} to define new log-like functions.

%\newcommand{\dom}{\textit{dom}}
%\newcommand{\rng}{\textit{rng}}

% Enclose the argument in vert-bar delimiters:

% Enclose the argument in double-vert-bar delimiters:

\newtheorem{theorem}{Theorem}
\newtheorem{definition}[theorem]{Definition}
\newtheorem{example}[theorem]{Example}

\newtheorem{proposition}[theorem]{Proposition}

\newcommand{\val}{\mathscr{V}}
\newcommand{\tally}[1]{{\Sigma}#1}
\newcommand{\intrp}[1]{[\![#1]\!]}
\newcommand{\phimp}{\phi_{\mathrm{MP}}}

\title{Linear Programming Tools for Analyzing Strategic Games of Independence-Friendly Logic and Applications}
\author{Merlijn Sevenster}

\begin{document}

\maketitle

\abstract{In recent work, semantic games of independence-friendly logic were studied in strategic form in terms of (mixed strategy) Nash equilibria. The class of strategic games of independence-friendly logic is contained in the class of win-loss, zero-sum two-player games. In this note we draw on the theory of linear programming to develop tools to analyze the value of such games. We give two applications of these tools to independence-friendly logic under the so-called equilibrium semantics.}

\section{Introduction}
\label{Sec:Introduction}

At the heart of game-theoretic semantics \cite{Hintikka:73,Hintikka:96} lies the understanding that meaning emerges as the result of the interaction between rational agents who act in their own interest according to a set of rules. The meaning that arises thus is attached to the linguistic expression that constitutes this set of rules. The concept of game was used to flesh out this understanding, a concept that was also applied by Wittgenstein to the philosophy of (natural) language \cite{Wittgenstein:58}. A sample game, well known in the context of independence-friendly logic, is defined by the following set of rules parameterized by a function $f$. First an opponent chooses an $x$ and $\varepsilon>0$ on the reals. Then we choose a $\delta>0$ independent of $x$. The interaction terminates after the opponent has chosen a $y$. We ``win'' if the series of objects satisfies the condition
\begin{equation}
\label{Eq:ContinuousFunction}
\textrm{$|x-y| < \delta$ implies $|f(x) - f(y)| < \varepsilon$},
\end{equation}
otherwise the opponent ``wins''.

The linguistic expression of this rule set, in the syntax of independence-friendly (IF) logic, is
\[
	\forall x \forall \varepsilon (\exists \delta/x)\forall y \psi(x,\varepsilon,\delta,y),
\]
where $\psi(x,\varepsilon,\delta,y)$ is a formalization of (\ref{Eq:ContinuousFunction}). In game-theoretic semantics, the meaning of the latter expression, or any IF sentence for that matter, is defined as the conditions under which we win the game. We will make this more precise in due course.

For now it is important to emphasize that game-theoretic semantics puts rule-governed interaction at the center of attention, and that meaning is derived from it. This view must be contrasted to the view according to which the meaning of a logical expression is determined by conditions under which it is ``true'' in a (formalized) state of affairs. It seems that Tarski semantics for first-order logic is a formal counterpart of this view.

Interaction between self-interested agents is studied in game theory. In this area, a \emph{(pure) strategy} for a player completely specifies how to move in each choice point for that player. If we are given one strategy of each player in a game (in the formal, game-theoretic sense of the word), then we can traverse the sequence of choice points that arise if we follow the moving player's strategy. The respective players' \emph{payoffs} are distributed among the players once the terminal node in this sequence is reached. A win-loss game is a game in which the players can either win (i.e., receive payoff 1) or lose (i.e., receive 0). In the context of a win-loss game, a strategy is \emph{winning} if it results in a win for its owner against each strategy of its opponent.

Game-theoretic semantics for independence-friendly logic was developed (first in spirit \cite{HintikkaSandu:89}, then in formalism \cite{Dechesne:PhD:05}) in the framework of extensive games. This framework considers a game as a \emph{game tree} in which each node corresponds to a choice point for a player or a terminal node (i.e., a node in which payoff is returned to the players). Folklore has it that semantic games of IF logic are played between Eloise and Abelard. The game tree that formalizes the interactions between Eloise and Abelard constituting the meaning of an IF sentence $\phi$ in the context of a suitable structure $\str{M}$ is called an \emph{extensive game of imperfect information}, denoted $G(\str{M},\phi)$.

The meaning of $\phi$ can be seen to emerge from interaction in $G(\str{M},\phi)$, by inspecting that the condition
	\begin{equation} \label{Eq:TrueTarski}
		\textrm{$\phi$ is true on $\str{M}$ (written $\str{M} \models^+ \phi$)}
	\end{equation}
coincides with the condition
	\begin{equation} \label{Eq:TrueGTS}
		\textrm{Eloise has a winning strategy in the game $G(\str{M},\phi)$},
	\end{equation}
and that
	\begin{equation} \label{Eq:FalseTarski}
		\textrm{$\phi$ is false on $\str{M}$ (written $\str{M} \models^- \phi$)}
	\end{equation}
coincides with
	\begin{equation} \label{Eq:FalseGTS}
		\textrm{Abelard has a winning strategy in the game $G(\str{M},\phi)$}.
	\end{equation}

From a logical point of view, game-theoretic semantics has several advantages. Its tree-based view nicely reflects the dependence between nested quantifiers. Conditions (\ref{Eq:TrueTarski}) to (\ref{Eq:FalseGTS}) show us how one particular type of interaction coincides with the meaning of IF sentences, at least in terms of their truth and falsity conditions. 
%It has been recognized that the game-theoretic way of thinking about quantifiers and logical satisfaction is a most powerful tool in the classroom.

The equivalences between Conditions (\ref{Eq:TrueTarski}) to (\ref{Eq:FalseGTS}) show us how known grounds --- i.e., the Tarskian notions of truth and falsity --- can be covered by game-theoretical semantics. They also give us a lead for exploring uncharted territory. Namely, we see that the above conditions are based on one mode of interaction only, that is, one player having a winning strategy. Thus a whole research agenda unfolds itself in front of us: analyzing the interrelations between game-theoretic interactions on the one hand and the meanings that arise from them on the other hand. The Matching Pennies sentence $\phimp$ serves to illustrate the type of questions that motivate this agenda:
\[
	\forall x(\exists y/x) x = y
\]
On structures $\str{M}$ with more than one element, neither player has a winning strategy in the game $G(\str{M},\phimp)$. Such games are said to be \emph{undetermined}. The games $G(\str{M},\phimp)$ are not covered by the Conditions (\ref{Eq:TrueGTS}) and (\ref{Eq:FalseGTS}), as if no meaning can be seen to emerge from them. 

This observation can be made more precise. Every IF sentence $\phi$ partitions the class of suitable structures in three:
	\[
		\big( 
			\intrp{\phi}^-
			, 
			\intrp{\phi}^{\not\pm}
			, 
			\intrp{\phi}^+
		\big),
	\]
where $\intrp{\phi}^+$ denotes the set of structures on which Eloise has a winning strategy, $\intrp{\phi}^-$ denotes the set of structures on which Abelard has a winning strategy, and $\intrp{\phi}^{\not\pm}$ contains the other structures, i.e., the structures on which neither player has a winning strategy. In the case of the Matching Pennies sentence, $\intrp{\phimp}^+$ is the set of structures with one element; $\intrp{\phimp}^-$ is empty; and $\intrp{\phimp}^{\not\pm}$ is the set of structures with more than one element. The observation that game-theoretic semantics does not cover the undetermined games of $\phimp$, i.e. the games on $\intrp{\phimp}^{\not\pm}$, touches on the following question: How can we give a \emph{direct} definition of $\intrp{\phi}^{\not\pm}$? --- understanding that its present definition, stated in terms of the absence of a winning strategy for either player, is indirect.

We can also study the class of semantic games as a game-theoretic entity in its own right and ignore the fact that each game in this class is constituted by an IF sentence and a structure. From such a point of view, it is only natural\footnote{The author is grateful to Allen L.~Mann for suggesting this point of view.} to generalize from pure strategies to \emph{mixed strategies}, those being the dominant species of strategies in game theory. Mixed strategies are studied more naturally in the framework of \emph{strategic games}, which ignore the games' sequential turn-taking dynamics. Finally, instead of studying winning (pure) strategies, we can now shift our attention to equilibrium mixed strategies, that is, mixed strategies that cannot be improved upon by any of the players.

In a recent publication \cite{SevensterSandu:10}, rooted in an observation by Ajtai \cite{BlassEtAl:86} and anticipated in \cite{Sevenster:06,Galliani:09}, the strategic game theory of IF games was developed. A mixed strategy is a probability distribution over a set of pure strategies. If Eloise and Abelard both play mixed strategies, that is, if they pick their pure strategies at random according to their mixed strategies, the pair of pure strategies that will be played is effectively selected from the lottery determined by the product of their mixed strategies. If we associate payoff $0$ with Eloise losing the outcome of playing two pure strategies against each other, and $1$ with her winning, we can define the \emph{expected payoff} of Eloise as the expected utility that is returned to her in this lottery. It is not hard to see that Eloise's expected utility falls in $[0,1]$. 

Informally, an equilibrium is a state of the game in which the players' powers to influence the outcome are in balance, or, somewhat more formally, it is a pair of mixed strategies in which neither player can benefit from unilateral deviation. The \emph{value} of a strategic game between Eloise and Abelard is defined as Eloise's expected utility of an equilibrium.

In \cite{SevensterSandu:10}, the strategic IF game $\Gamma(\str{M},\phi)$ was defined as the strategic counterpart of the extensive game $G(\str{M},\phi)$. It was further postulated that the \emph{value} of $\phi$ on $\str{M}$ is the value of $\Gamma(\str{M},\phi)$, that is, Eloise's \emph{expected utility} in $\Gamma(\str{M},\phi)$. For instance, as is easily proven (see also Example \ref{Ex:Prop:Equivalence:1} below), the Matching Pennies sentence has value $1/n$ on structures of size $n$. The notation $\str{M} \models_\varepsilon \phi$ was introduced to indicate that $\phi$ has value $\varepsilon$ on $\str{M}$. We will introduce the framework of \emph{equilibrium semantics} and key results more rigorously in the next section, including the results by which every finite strategic IF game has one unique value. 

The strategic view disregards the sequential turn-taking of the games trees, which are so nicely reflected the quantifier alternation of IF sentences. In return we get a formalism in which the notion of strategy is atomic. This somehow matches the way in which interaction is primitive in the philosophy behind game-theoretic semantics. Furthermore, it is to be understood that the strategic view on semantic games is a generalization of the extensive view, in the sense that conditions can be found in terms of equilibrium mixed strategies that are equivalent to Conditions \ref{Eq:TrueTarski} and \ref{Eq:FalseGTS}. Indeed, one of the first results about equilibrium semantics, reiterated in the next section, has it that
\begin{align}
	\intrp{\phi}^- & = \intrp{\phi}^0 \\
	\intrp{\phi}^+ & = \intrp{\phi}^1,
\end{align}
where $\intrp{\phi}^\varepsilon$ denotes the class of structures on which $\phi$ has value $\varepsilon$. This result shows that equilibrium semantics is a \emph{conservative extension} of traditional game-theoretic semantics. It also shows that $\intrp{\phi}^{\not\pm}$, which was defined indirectly in game-theoretic semantics, can be defined directly in equilibrium semantics:
	\begin{align}
		\intrp{\phi}^{\not\pm} & = \bigcup_{\varepsilon \in (0,1)} \intrp{\phi}^\varepsilon.
	\end{align}

It is yet to be seen what type of meaning is constituted by the interaction studied in equilibrium semantics. At present, no coherent semantic interpretation has been given of very the notion of value. In an attempt to get a handle on the problem of interpreting $\models_\varepsilon$ consider the partitioning
	\[
		\big( \intrp{\phi}^\varepsilon \big)_{\varepsilon \in [0,1]}
	\]
for any IF sentence $\phi$. For instance, $\intrp{\phimp}^{1/n}$ contains the structures of size $n$. With each class $\intrp{\phi}^\varepsilon$ we can seek a logical expression $\psi^\varepsilon$ that defines it, in the sense that $\str{M} \in \intrp{\phi}^\varepsilon$ if, and only if, $\psi^\varepsilon$ is true on $\str{M}$. From a model-theoretic point of view we are interested in the logical languages in which such $\psi$ can be defined, whereas, from a more philosophical viewpoint, we are interested to learn the interrelations between the sentences in
	\[
		\big( \psi^\varepsilon \big)_{\varepsilon \in [0,1]}.
	\]
For instance, how does $\psi^\varepsilon$ relate to $\psi^1$, which expresses $\phi$'s truth conditions, and $\psi^0$, which expresses its falsity conditions?

One of the obstacles we are facing in this respect is the informal way of thinking about game-theoretical semantics. For instance, we grew used to thinking of Eloise ``wanting'' to prove that the sentence $\phi$ is true, and Abelard ``wanting'' to prove that it is false. In the case of the Matching Pennies sentence, this would mean that Eloise wants to establish that the structure has one element and that Abelard wants to establish the logical contradiction, whatever that may mean.

In equilibrium semantics, it is unclear what semantic relation Eloise and Abelard want to establish between $\phi$ and $\str{M}$. For all we know, Eloise and Abelard want to maximize their payoff in $\Gamma(\str{M},\phi)$, but what does that tell us about the relation between $\phi$ and $\str{M}$?

Another obstacle for understanding $\models_\varepsilon$ is the fact that we lack tools to analyze strategic IF games. Establishing the series $( \psi^\varepsilon)_{\varepsilon \in [0,1]}$ of seemingly simple $\phi$ may take several pages of text, especially if $\phi$ is interpreted on arbitrary graph-like structures. To grasp this point it is instructive to realize that the problem of determining the value of an arbitrary win/loss game with values 0 and 1 reduces to the problem of determining the value of $\forall x (\exists y/x) R(x,y)$, in the sense that if we have an algorithm to solve the latter, we can tweak it to solve the former. 

This computational concern touches on the worst-case computational complexity of determining the value of an arbitrary win/loss game. It is known that this problem can be defined as a linear programming problem, for which efficient (polynomial time) algorithms have been proposed. Unfortunately, these algorithm are fairly intricate, which as yet renders them quite useless, in their current forms, for establishing the value of strategic IF games.

In this paper, we shall exploit the linear programming view on strategic IF games to develop a set of tools for determining and approximating their value. It is important to realize that the tools developed in this way are weaker than the efficient algorithms that were proposed earlier to solve arbitrary linear programming problem. If our results have any merits, it may be in the fact that they help us to more easily determine the value of certain strategic IF games, or that they inspire the construction of more powerful tools.

In the next section, we will review definitions and elementary results of equilibriums semantics. In Section \ref{Sec:Games} we present some tools to analyze win-loss, zero-sum, two-player strategic game. In Section \ref{Sec:Applications} we apply these tools to analyze the values of two IF sentences that pertain to the birthday problem and hashing.

\section{Preliminaries}
\label{Sec:Preliminaries}

An extensive game $G$ describes all positions of the game and how it proceeds from one to the other. In an extensive game with players $P$, each player $p \in P$ has a set of \emph{(pure) strategies} $S_p$. A pure strategy is essentially a rule book that prescribes how its owner moves in every position of the game (i.e., \emph{history}) in which it is his/her turn. A definition of extensive games can be found in \cite{Dechesne:PhD:05,MannEtAl:11}.

A \emph{strategy profile (for the players $P$)} $\bar{\sigma}$ is a function that selects an appropriate strategy for each player in $P$. If $P = \{p_0,\ldots,p_{n-1}\}$ we shall also write $\bar{\sigma}$ as the sequence $(\sigma_{p_0},\ldots,\sigma_{p_{n-1}})$. An extensive game $G$ has a utility function $u_p$ for each player $p$ that assigns a real value to each of the game's strategy profiles.

We shall be interested in two-player games, so that our strategy profiles contain two strategies. We shall use the symbols $\exists$ and $\forall$ to mark the game's contestants, Eloise and Abelard: $P=\{\exists,\forall\}$. Moreover we shall focus on \emph{win-loss} and \emph{zero-sum} games, that is, the utility functions $u_\exists$ and $u_\forall$ will be functions with range $\{0,1\}$ such that for each strategy profile $(\sigma,\tau)$, $u_\exists(\sigma,\tau) + u_\forall(\sigma,\tau)=1$. Since in this type of games, $u_\forall$ is uniquely determined by $u_\exists$, we shall simply write $u$ for $u_\exists$ and mostly ignore $u_\forall$.

A pure strategy $\sigma \in S_p$ is \emph{winning} in a win-loss, zero-sum game if $u_p(\sigma,\tau) =1$ for each strategy $\tau$ of $p$'s opponent $\bar{p}$.

Independence-friendly logic is the extension of first-order logic whose quantifiers $(Qx/X)$ are furnished with sets of variables $X$ indicating that the choice of quantifier $Qx$ be made independent from the variables in $X$. In this paper we shall only use the syntax of IF logic when we apply our game-theoretic results to express certain properties. We refer the reader to \cite{MannEtAl:11} for a comprehensive introduction to the field of IF logic, which also introduces more gently the basic notions of equilibrium semantics.

Sentences of IF logic are evaluated on \emph{structures} 
\[
	\str{M}=\big(M,R_0^\str{M},R_1^\str{M},\ldots,f_0^\str{M},f_1^\str{M},\ldots\big),
\] 
where $M$ is the \emph{universe} of $\str{M}$, $R_i^\str{M}$ is the \emph{interpretation} of \emph{relation symbol} $R_i$ and $f_i^\str{M}$ is the interpretation of \emph{function symbol} $f_i$, as usual.

The semantic game of an IF sentence $\phi$ on a structure $\str{M}$ gives rise to the \emph{(extensive) IF game} 
\(
 	G(\str{M},\phi)
\),
which is a two-player, win-loss and zero-sum game. It is also a game of imperfect information if $\phi$ has quantifiers $(Qx/X)$ in which $X$ is nonempty.

The framework of strategic game theory gives another way of looking at games. Suppose that $G$ is the extensive formalization of a game. Then, the strategic form of the same game would be 
\[
	\Gamma = \big( (S_p)_{p \in P}, (u_p)_{p \in P} \big),
\]
where $P$ is the set of players as before, $S_p$ is the set of $p$'s pure strategies in $G$, and $u_{p}$ is player $p$'s utility function in $G$. The strategic game $\Gamma$ is two-player/win-loss/zero-sum, whenever $G$ is. We shall write $\Gamma(\str{M},\phi)$ for the \emph{strategic IF game} that is the strategic counterpart of the extensive IF game $G(\str{M},\phi)$.

A \emph{mixed strategy} $\mu_p$ of player $p$ in $\Gamma$ is a probability distribution over $S_p$, that is, $\mu_p$ is a function for which for every $\sigma \in S_p$, $0 \leq \mu_p(\sigma) \leq 1$, and $\sum_{\sigma \in S_p} \mu_p(\sigma) = 1$. The mixed strategy $\mu_p$ is \emph{uniform} if it assigns equal probability to each pure strategy in $S_p$. We say that $\mu_p$ is \emph{uniform in $T$}, for any $T \subseteq S_p$, if the domain of $\mu_p$ is $T$ and if it assigns equal probability to each pure strategy in $T$.

We extend the notion of strategy profile to mixed strategies; whence a strategy profile may also refer to a sequence $(\mu_p)_{p \in P}$ of mixed strategies. A strategy profile of mixed strategies defines a \emph{lottery} over the set of outcomes of the game, that is, strategy profile $(\sigma_p)_{p\in P}$ is drawn with likelihood
\[
	\prod_{p \in P} \mu_p(\sigma_p).
\] 
The \emph{expected utility} for player $p$ is given by $p$'s expected utility in the lottery. For a strategy profile of mixed strategies $(\mu_\exists,\mu_\forall)$, the expected utility is defined as as
	\[
		U_p(\mu_\exists,\mu_\forall) = \sum_{\sigma \in S_\exists}\sum_{\tau \in S_\forall} \mu_\exists(\sigma) \mu_\forall(\tau) u_p(\sigma,\tau).
	\]

If $\Gamma$ is a zero-sum and win-loss game between $\exists$ and $\forall$, then we have that $U_\exists(\mu,\nu) + U_\forall(\mu,\nu) = 1$. In this case, for the same reason as before, we shall write $U$ for $U_\exists$ and forget about $U_\forall$.

The theory of mixed strategy equilibrium predicts that Eloise and Abelard will settle on a pair of mixed strategies in which neither player benefits from unilateral deviation, that is, from choosing another mixed strategy.

\begin{definition}
Let $\Gamma$ be a two-player strategic game. The strategy profile $(\mu_\exists,\mu_\forall)$ is an \emph{equilibrium (in mixed strategies) in $\Gamma$} if for each player $p \in \{\exists,\forall\}$,
\[
	U_p(\mu_p,\mu_{\bar{p}}) \geq U_p(\mu_p',\mu_{\bar{p}})
\]
for each mixed strategy $\mu_p'$ of $p$.

A strategy in an equilibrium is called an \emph{equilibrium strategy}.
\end{definition}

The Minimax Theorem (see Theorem \ref{Th:Minimax} below) shows that every finite, two-player, zero-sum game $\Gamma$ has an equilibrium. Nash \cite{Nash:51} later generalized this result to arbitrary finite strategic games, and this type of equilibrium has henceforth been associated with his name. Since in this work we shall only require the Minimax Theorem, we shall not use the term Nash equilibrium despite the fact that it seems to be more common in the literature on game theory.

It is not hard to see that if $\Gamma$ has multiple equilibria, they all return the same expected utility to Eloise. We call this the \emph{value} of the game, and write it as $\val(\Gamma)$. We define \emph{equilibrium semantics} as the relation $\models_\varepsilon$ for which
\[
	\str{M} \models_\varepsilon \phi \quad \textrm{iff} \quad \textrm{$\val(\Gamma)=\varepsilon$},
\]
where $\Gamma = \Gamma(\str{M},\phi)$. This relation is well defined for finite structures $\str{M}$, but not necessarily on infinite structures. Thus, in this paper, we shall only consider finite structures.

The present definition of equilibrium semantics is not compositional, that is, the value of an IF formula is not determined on the basis of the values of its subformulas. Interestingly, it was shown by Galliani and Mann \cite{GallianiMann:11} that compositionality can be restored by extending Hodges' trump semantics with probability distributions over assignments. This approach may yield other tools for analyzing the values of IF strategic games.

\section{Games}
\label{Sec:Games}

In this section we take a linear programming perspective on computing the value of two-person, zero-sum strategic games that is known from the literature \cite{Raghavan:94}. This class of games contains the strategic IF games as a subclass. Thus we can use insights obtained to construct tools for computing and approximating the value of strategic IF games.

\subsection{Linear programming}
We write $0,\ldots,m-1$ for Eloise's pure strategies and $0,\ldots,n-1$ for Abelard's in a strategic game. If Eloise plays $i$ and Abelard plays $j$, Eloise receives $u(i,j) \in \{0,1\}$. Oftentimes we shall consider the payoff function $u$ as a matrix:
	\[
		\left[
			\begin{array}{ccc}
				u(0, 0)	& \cdots & u(0, n-1)\\[2mm]
		 		\vdots & \ddots & \vdots \\[3mm]
				u(m-1,0)	& \cdots  & u(m-1, n-1) 
			\end{array}
		 \right]
	\]
In fact we shall regard such matrices $u$ as games in their own right, understanding that Eloise controls the row strategies and Abelard controls the column strategies. Accordingly we write $\val(u)$ for the value of the game corresponding to the matrix $u$. \emph{Throughout this section the word ``game'' designates any matrix $u$ with entries carrying values in the range $\{0,1\}$, unless specified otherwise.}

The \emph{security level for Eloise} in a game $u$ is defined as 
\[
	\max_\mu \min_\nu U(\mu,\nu),
\] 
where $\mu$ ranges over Eloise's mixed strategies in $u$ and $\nu$ over Abelard's. It may be instructive to take a game-theoretic view on the expression $\max_\mu \min_\nu U(\mu,\nu)$. According to this view, the security level is the value that is the result of a game between Maximizer and Minimizer. In this game, Maximizer chooses a mixed strategy $\mu$ for $\max_\mu$. Then Minimizer chooses a mixed strategy $\nu$ for $\min_\nu$ knowing $\mu$. The game ends and Maximizer receives $U(\mu,\nu)$ and Minimizer receives $1- U(\mu,\nu)$. Thus the security level corresponds to the maximal value that Maximizer can secure. Similarly, the \emph{security level for Abelard} is defined as $\min_\nu \max_\mu U(\mu,\nu)$. 

Note the informational asymmetry between the games defined by
\[
	\max_\mu \min_\nu U(\mu,\nu)
\]
and 
\[
	\min_\nu \max_\mu U(\mu,\nu).
\] 
In the former game Minimizer observes the move by Maximizer before she picks her mixed strategy, whereas in the latter game Maximizer has the informational advantage. Below we shall associate Maximizer with Eloise and Minimizer with Abelard.

Von Neumann's Minimax Theorem \cite{vonNeumann:28} compares the players' security levels with each other and with the game's value.

\begin{theorem}[Minimax]
\label{Th:Minimax}
For every zero-sum, two-player game $u$,
\begin{enumerate}
	\item $\max_{\mu} \min_{\nu} U(\mu,\nu) = \min_{\nu} \max_{\mu} U(\mu,\nu)$; and
	\item $\val(u) = \max_{\mu} \min_{\nu} U(\mu,\nu)$.
\end{enumerate}
\end{theorem}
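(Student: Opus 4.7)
The plan is to prove both parts simultaneously by reducing to strong linear programming duality. The first step is to rewrite the inner minimum of $\max_\mu \min_\nu U(\mu,\nu)$ as a minimum over Abelard's pure strategies: since $\nu \mapsto U(\mu,\nu)$ is linear on the simplex, its minimum is attained at some extreme point, so
\[
\min_\nu U(\mu,\nu) = \min_{0 \leq j < n} \sum_{i=0}^{m-1} \mu(i)\, u(i,j).
\]
Consequently, $\max_\mu \min_\nu U(\mu,\nu)$ equals the optimal value of the primal LP (P) that maximizes $v$ subject to $\sum_i \mu(i) u(i,j) \geq v$ for every $j$, $\sum_i \mu(i) = 1$, and $\mu(i) \geq 0$. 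A symmetric argument shows that $\min_\nu \max_\mu U(\mu,\nu)$ equals the optimal value of the LP (D) that minimizes $w$ subject to $\sum_j \nu(j) u(i,j) \leq w$ for every $i$, together with the corresponding probability constraints on $\nu$.

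A routine transcription shows that (P) and (D) are duals of each other in the LP sense. Both are feasible (take uniform distributions) and their objectives are bounded in $[0,1]$, so strong LP duality yields a common optimal value $\varepsilon$, which proves part 1. For part 2, let $\mu^*$ be optimal for (P) and $\nu^*$ optimal for (D). The constraints of (P) give $U(\mu^*,\nu) \geq \varepsilon$ for every mixed $\nu$, and those of (D) give $U(\mu,\nu^*) \leq \varepsilon$ for every mixed $\mu$. Specializing to $\nu=\nu^*$ and $\mu=\mu^*$ pins $U(\mu^*,\nu^*) = \varepsilon$, so $(\mu^*,\nu^*)$ is a saddle point and hence an equilibrium returning payoff $\varepsilon$ to Eloise. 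For any other equilibrium $(\mu',\nu')$, combining the equilibrium inequalities with the saddle-point bounds at $(\mu^*,\nu')$ and $(\mu',\nu^*)$ forces $U(\mu',\nu') = \varepsilon$, so $\val(u) = \varepsilon$ is well defined and coincides with both security levels.

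The main obstacle is invoking strong LP duality itself, which rests on a nontrivial separation argument (typically Farkas' lemma) rather than any game-theoretic content. Alternative routes, for instance Loomis's elementary induction on the matrix size or a Brouwer/Kakutani fixed-point argument, are available, but the LP-based proof sketched above integrates most naturally with the linear-programming framing of this section and with the dual-LP toolbox that the later sections will exploit.
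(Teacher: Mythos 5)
Your argument is sound, but there is nothing in the paper to compare it against: the Minimax Theorem is stated as a classical result and attributed to von Neumann's 1928 paper without proof, and the well-definedness of $\val(u)$ across multiple equilibria is likewise dismissed as ``not hard to see.'' Your LP-duality route is the standard modern proof and is the natural one here, since it is exactly the primal program (maximize $v$ subject to $\mu\col^u(j)\geq v$ and the probability constraints) that the paper sets up explicitly in Section \ref{Sec:Games}; your reduction of $\min_\nu U(\mu,\nu)$ to a minimum over pure strategies is the paper's Eq.~(\ref{Eq:MixedVersusPure}), and your saddle-point argument for part 2 and for the uniqueness of the equilibrium payoff is correct. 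The one place where your proof is not self-contained is the appeal to strong LP duality, which you rightly flag as carrying the real mathematical weight (via Farkas' lemma or a separating-hyperplane argument); since the paper treats the theorem as imported background rather than something to be established, leaving that as a black box is consistent with the level of detail the paper itself adopts, though a fully rigorous write-up would either prove duality or cite it precisely.
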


The Minimax Theorem implies that the informational asymmetry between $\max_\mu \min_\nu$ and $\min_\nu \max_\mu$ cannot be utilized by either player, that is, it does not negatively affect her expected utility if Eloise hands over to Abelard the strategy $\mu$ that maximizes $\min_\nu U(\mu,\nu)$ \emph{before Abelard makes his choice}. In the same vein, it does not negatively affect Abelard's expected utility if he hands over the strategy $\nu$ that minimizes $\max_\mu U(\mu,\nu)$ before Eloise makes her choice. 

It is easy to check that for any given mixed strategy $\mu$,
	\begin{equation}
		\min_\nu U(\mu,\nu) = \min_{0 \leq j < n} U(\mu,j),
	\label{Eq:MixedVersusPure}	
	\end{equation}
where $U(\mu,j)$ denotes the expected utility of Eloise if she plays $\mu$ against the pure strategy $j$. So, whenever Eloise hands over her strategy $\mu$, all Abelard needs to do is compute the expected utility $U(\mu,j)$ for each of his pure strategies $j$. If $\mu$ is an equilibrium strategy  and $j$ minimizes $U(\mu,j)$, $\val(u)$ is equal to $U(\mu,j)$.

Introduce the variable $\mu_i$ to represent the value $\mu(i)$ that Eloise's mixed strategy $\mu$ assigns to her pure strategy $i$. We can regard $\mu$ as the row vector
		\[
			\big[ \mu_0, \ldots, \mu_{m-1} \big].
		\]
Multiplying Eloise's strategy $\mu$ (as row vector) with $u$ yields the row vector
		\[
			\big[ U(\mu,0), \ldots, U(\mu,n-1) \big].
		\]
Reading Abelard's strategy $\nu$ as a column vector, $\mu u \nu$ is equal to $U(\mu,\nu)$.

We write $\row^u(i)$ for the $i$th row in $u$, which is a row vector, and $\col^u(j)$ for the $j$th column in $u$, which is a column vector. For a vector of values $v=[v_0,\ldots,v_{k-1}]$, let $\tally{v}$ denote the sum of its elements: $\sum_{0 \leq i < k} v_i$. Clearly, for our $u$, $\tally{\row^u(i)}$ coincides with the number of nonzero entries in the $i$th row in $u$. We say that $u$ is \emph{row balanced} if all its rows have the same sum: $\tally{\row^u(i)} = \tally{\row^u(i')}$, for all $0 \leq i, i' < m$. Similarly, we say that $u$ is \emph{column balanced} if $\tally{\col^u(j)} = \tally{\col^u(j')}$, for all $0 \leq j, j' < n$. A game is \emph{balanced} if it is both row and column balanced.

If Eloise plays $\mu$ and Abelard plays $j$, Eloise's expected utility $U(\mu,j)$ is the product of $\mu$ and $\col^u(j)$. Consequently, 
Eloise's task of maximizing 
	\[
		\min_{0 \leq j < n} U(\mu,j)
	\]
boils down to selecting a mixed strategy $\mu$ that maximizes the minimal element $v$ in 
	\[
		\big[ 
			\mu \col^u(0) ,\ldots, \mu \col^u(n-1)
		\big],
	\]	
that is, optimizing $v$ subject to the following constraints:
		\begin{align*}
			\mu \col^u(0) & \geq v \\[-1mm]
				& \vdots & \\
			\mu \col^u(n-1) & \geq v,
		\end{align*}
plus (for every $0 \leq i < n$):
		\[
			\mu_i \geq 0
		\]	
and
		\[
			\mu_0 + \ldots + \mu_{n-1} = 1.
		\]	
The latter $n+1$ constraints ensure that $\mu$ is a proper probability distribution. Modulo some rewriting, the above constraints constitute a linear programming problem. The solution, i.e., the optimized value for $v$, coincides with Eloise's security level in the underlying game, which coincides with its value by the Minimax Theorem.
	
As an example consider the game:
		\[
			\left[
				\begin{array}{cccc}
					1 & 1 & 0 & 0 \\
					0 & 1 & 1 & 0 \\
					1 & 0 & 1 & 0 \\
					1 & 1 & 1 & 0 \\
				\end{array} 
			\right],
		\]
which yields the following four constraints (in addition to the five constraints that ensure that $\mu$ is a probability distribution):
	\begin{align*}
		\mu_0 + \mu_2 + \mu_3 & \geq v \\
		\mu_0 + \mu_1 + \mu_3 & \geq v \\
		\mu_1 + \mu_2 + \mu_3 & \geq v \\
		0 & \geq v.
	\end{align*}
	Due to the fourth constraint, the maximum for $v$ is $0$ regardless of $\mu_0,\ldots,\mu_3$. Thus, whatever strategy Eloise plays, she has expected utility $0$, that is, Abelard has a winning strategy.
	
Flipping the bottom right value gives the game
		\[
			\left[\begin{array}{cccc}
			1 & 1 & 0 & 0 \\
			0 & 1 & 1 & 0 \\
			1 & 0 & 1 & 0 \\
			1 & 1 & 1 & 1 \\
			\end{array} \right],
		\]
yielding the same constraints as above, replacing the fourth by
	\[
		\mu_3 \geq v.
	\]
The maximum value for $v$ is $1$, realized by $\mu_3=1$ and $\mu_0=\mu_1=\mu_2=0$, reflecting the fact that the bottom strategy is winning for Eloise.
			
Finally, we consider an undetermined game:
		\[
			u = \left[\begin{array}{cccc}
			1 & 0 & 0 & 0 \\
			0 & 1 & 1 & 0 \\
			0 & 0 & 1 & 1 \\
			0 & 1 & 0 & 1 \\
			\end{array} \right]
		\]
yielding the following constraints:
	\begin{align*}
		\mu_0 & \geq  v \\
		\mu_1 + \mu_3 & \geq  v \\
		\mu_1 + \mu_2 & \geq  v \\
		\mu_2 + \mu_3 & \geq  v.
	\end{align*}
From the last three equations we derive that $\mu_1=\mu_2=\mu_3$.  So $\mu u$ is the row vector
	\[
		[ \mu_0, 2\mu_1, 2\mu_1, 2\mu_1 ].
	\]
From the first equation, it follows that $\mu_0 = 2\mu_1$ so the minimal element in this vector is only maximized by assignments for which $\mu_0 = 2\mu_1$. Since we require that $\mu$ be a probability distribution, there is only one such assignment: the one for which $\mu_0 = 2/5$ and $\mu_1=1/5$. Accordingly the value of the game is $2/5$.
	
It is tempting to replace the inequality symbols $\geq$ by the equality symbol $=$. Doing so does not affect the outcome of the latter game, but generally it is untrue that a maximizing $\mu$ yields a vector $\mu u$ of the form
	\[
		\big[ U(\mu,\tau_0), \ldots, U(\mu,\tau_{n-1}) \big]
	\]
of equal values. See for instance the game:
		\begin{equation}
		\label{Eq:DontReplaceGeqWithIdentity:Matrix}
			\left[\begin{array}{cccccc}
			1 & 0 & 0 & 0 & 0 & 1 \\
			1 & 0 & 0 & 1 & 0 & 0 \\
			0 & 1 & 1 & 1 & 0 & 0 \\
			1 & 1 & 0 & 0 & 1 & 0 \\
			0 & 0 & 1 & 0 & 1 & 0
			\end{array} \right]
		\end{equation}
Eloise's strategy $\mu$ such that $\mu_0 = \mu_1 = \mu_3 =  1/7$ and $\mu_2 = \mu_4 = 2/7$ maximizes $\min_\tau (\mu,\tau) = 3/7$. However, $U(\mu,\tau_2)=4/7$. To see that there is no maximizing $\mu$ that yields a vector $\mu u$ of equal values, consider the game's corresponding linear programming problem, replacing $\geq$ by $=$, we get
	\begin{align}
	\mu_0 + \mu_1 + \mu_3 & = v \label{Eq:DontReplaceGeqWithIdentity:Con1}\\
	\mu_2 + \mu_3 & = v \label{Eq:DontReplaceGeqWithIdentity:Con2}\\
	\mu_2 + \mu_4 & = v \label{Eq:DontReplaceGeqWithIdentity:Con3}\\
	\mu_1 + \mu_2 & = v \label{Eq:DontReplaceGeqWithIdentity:Con4}\\
	\mu_3 + \mu_4 & = v \label{Eq:DontReplaceGeqWithIdentity:Con5}\\
	\mu_0 + \mu_4 & = v \label{Eq:DontReplaceGeqWithIdentity:Con6}.
	\end{align}
Eqs.~(\ref{Eq:DontReplaceGeqWithIdentity:Con2}) and (\ref{Eq:DontReplaceGeqWithIdentity:Con3}) imply $\mu_3 = \mu_4$. In the same vein, Eqs.~(\ref{Eq:DontReplaceGeqWithIdentity:Con3}) and (\ref{Eq:DontReplaceGeqWithIdentity:Con4}) imply $\mu_1 = \mu_4$; Eqs.~(\ref{Eq:DontReplaceGeqWithIdentity:Con5}) and (\ref{Eq:DontReplaceGeqWithIdentity:Con6}) imply $\mu_0 = \mu_3$; and Eqs.~\ref{Eq:DontReplaceGeqWithIdentity:Con2} and \ref{Eq:DontReplaceGeqWithIdentity:Con5} imply $\mu_2 = \mu_4$. We conclude that $\mu_0 = \ldots = \mu_4 = 1/5$, contradicting Eqs.~(\ref{Eq:DontReplaceGeqWithIdentity:Con1}) and (\ref{Eq:DontReplaceGeqWithIdentity:Con2}).

\subsection{Bounds and characterizations}
For an $m \times n$ matrix $u$, we let $\colmin(u)$ denote
	\[
		\min\big\{ \tally{\col^u(0)}, \ldots, \tally{\col^u(n-1)} \big\},
	\]
and $\colargmin(u)$ the set of indices $0 \leq j < n$ for which
	\[
		\tally{\col^u(j)} = \colmin(u).
	\]
In a similar way we introduce $\rowmax$ and $\rowargmax$. We define 
	\[
		\floor(u) = \frac{\colmin(u)}{m}
	\]	
and 
	\[
		\ceil(u) = \frac{\rowmax(u)}{n}.
	\]
	
%The \emph{base vector} of an $m \times n$ matrix $u$ is the vector of length $n$ in which the $j$th element carries the sum of the $j$th column in $u$:
%	\[
%		\big[ \tally{\col^u(0)), \ldots, \tally{\col^u(n-1)) \big].
%	\]
%We can also think of the base vector as the result of multiplying the all-1 vector of length $m$ with $u$: $\vec{1}u$. We define the \emph{base tally} as the minimal element in the base vector, and the \emph{base value} as the base tally divided by $m$. 

For instance, the matrix $u$ in (\ref{Eq:DontReplaceGeqWithIdentity:Matrix}) has $\floor(u) = 1/5$ and $\ceil(u) = 3/6$.

\begin{proposition}
\label{Prop:LowerBound1} 
For a game $u$,
	\begin{enumerate}
		\item $\floor(u) = \min_{\nu} U(\bar{\mu},\nu)$, where $\bar{\mu}$ is Eloise's uniform strategy; and
		\item $\floor(u) \leq \val(u)$.
	\end{enumerate}
\end{proposition}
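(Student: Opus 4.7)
The plan is to handle the two clauses in sequence, using part~1 as a stepping stone for part~2. For part~1 I would just unwind the definitions: if $\bar{\mu}$ is uniform then $\bar{\mu}_i = 1/m$ for every $0 \leq i < m$, so for each pure Abelard strategy $j$,
\[
 U(\bar{\mu},j) \;=\; \sum_{0 \leq i < m} \bar{\mu}_i\, u(i,j) \;=\; \frac{1}{m}\sum_{0 \leq i < m} u(i,j) \;=\; \frac{\tally{\col^u(j)}}{m}.
\]
Invoking the observation (\ref{Eq:MixedVersusPure}) that against a fixed $\mu$ it always suffices for Abelard to play a pure strategy, I obtain
\[
 \min_\nu U(\bar{\mu},\nu) \;=\; \min_{0 \leq j < n} U(\bar{\mu},j) \;=\; \frac{\colmin(u)}{m} \;=\; \floor(u),
\]
which is exactly what part~1 asks for.

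For part~2 I would apply the Minimax Theorem (Theorem \ref{Th:Minimax}), which identifies $\val(u)$ with Eloise's security level $\max_\mu \min_\nu U(\mu,\nu)$. Since the maximum over all mixed strategies is at least the value attained by the particular choice $\mu = \bar{\mu}$,
\[
 \val(u) \;=\; \max_\mu \min_\nu U(\mu,\nu) \;\geq\; \min_\nu U(\bar{\mu},\nu) \;=\; \floor(u),
\]
where the last equality is part~1. This finishes the proof.

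There is no real obstacle here: the statement is just the observation that Eloise's uniform strategy is a feasible witness in the max-min formulation, and the column sums divided by $m$ are precisely the expected payoffs she obtains against Abelard's pure strategies. If anything, the only thing to be careful about is distinguishing row-count $m$ from column-count $n$ in the denominator, since $\floor$ involves $m$ while $\ceil$ involves $n$; the definition of $\floor(u) = \colmin(u)/m$ is forced on us precisely because averaging by the uniform row distribution divides a column sum by $m$.
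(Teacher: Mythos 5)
Your proposal is correct and follows essentially the same route as the paper: part~1 by computing $U(\bar{\mu},j)=\tally{\col^u(j)}/m$ and reducing the minimization over mixed strategies to pure strategies via Eq.~(\ref{Eq:MixedVersusPure}), and part~2 by noting that $\bar{\mu}$ witnesses a security level of $\floor(u)$, which the Minimax Theorem identifies with a lower bound on $\val(u)$. The only cosmetic difference is that you cite the Minimax Theorem explicitly where the paper phrases the same step informally as ``Eloise can secure at least $\floor(u)$.''
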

	
\begin{proof}
Claim (1). Suppose $u$ is an $m \times n$ game. Eloise's strategy $\bar{\mu}$ assigns $1/m$ to each strategy $0 \leq i < m$. Multiplying $\bar{\mu}$ with $u$ yields:
	\[
		\big[ \tally{\col^u(0)}/m, \ldots, \tally{\col^u(n-1)}/m \big].
	\]
Abelard picks a strategy that yields 
	\[
		\min_j U(\bar{\mu},j)=\colmin(u)/m = \floor(u)
	\] 
for Eloise. By Eq.~(\ref{Eq:MixedVersusPure}), no mixed strategy of Abelard can outperform $j$, given that Eloise plays $\bar{\mu}$:
	\[
		\min_{j} U(\bar{\mu},j) = \min_{\nu} U(\bar{\mu},\nu).
	\]
	
Claim (2). Playing $\bar{\mu}$ yields at least $\floor(u)$ for Eloise, by Claim (1). So Eloise can secure at least $\floor(u)$ in $u$.
\end{proof}
	
\begin{proposition}
\label{Prop:UpperBound1}
For a game $u$,
	\begin{enumerate}
		\item $\ceil(u) = \max_{\mu} U(\mu,\bar{\nu})$, where $\bar{\nu}$ is Abelard's uniform strategy; and
		\item $\val(u) \leq \ceil(u)$.
	\end{enumerate}
\end{proposition}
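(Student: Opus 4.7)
The plan is to mirror the argument used for Proposition \ref{Prop:LowerBound1}, swapping the roles of rows and columns and replacing Eloise's uniform strategy by Abelard's.

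For Claim (1), I would first compute $U(\mu,\bar\nu)$ directly. If $u$ is $m\times n$ and $\bar\nu$ assigns probability $1/n$ to each of Abelard's pure strategies, then multiplying $u$ by $\bar\nu$ (as a column vector) yields the column vector with $i$th entry $\tally{\row^u(i)}/n$. So for any pure row strategy $i$, $U(i,\bar\nu) = \tally{\row^u(i)}/n$. Then I would invoke the dual of Eq.~(\ref{Eq:MixedVersusPure}), namely that for a fixed $\nu$, $\max_\mu U(\mu,\nu) = \max_{0 \le i < m} U(i,\nu)$, since a mixed strategy for Eloise is a convex combination of her pure strategies and therefore cannot exceed the best pure response. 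Taking the maximum over $i$ gives exactly $\rowmax(u)/n = \ceil(u)$, establishing Claim (1).

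For Claim (2), I would use the Minimax Theorem (Theorem \ref{Th:Minimax}), which tells us that $\val(u) = \min_\nu \max_\mu U(\mu,\nu)$. Since $\bar\nu$ is a particular (mixed) strategy for Abelard, the minimum over all $\nu$ is at most the value at $\bar\nu$, hence
\[
\val(u) \;=\; \min_\nu \max_\mu U(\mu,\nu) \;\leq\; \max_\mu U(\mu,\bar\nu) \;=\; \ceil(u),
\]
where the last equality is Claim (1). Intuitively, by committing to the uniform strategy Abelard forfeits any informational advantage, so Eloise can exploit this by playing the best row; this gives an upper bound on what Abelard can hope to hold her to, and hence on $\val(u)$.

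The argument is almost entirely dual to the proof just given for the lower bound, so there is no substantive obstacle; the only thing to be careful about is citing the correct dual of Eq.~(\ref{Eq:MixedVersusPure}) (which the excerpt only states in one direction). If one prefers not to appeal to Minimax at all, an alternative for Claim (2) is to observe directly that $\max_\mu \min_\nu U(\mu,\nu) \le \max_\mu U(\mu,\bar\nu)$ and then invoke Theorem~\ref{Th:Minimax}(2) to replace the left-hand side by $\val(u)$.
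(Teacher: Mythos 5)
Your proof is correct and is exactly the dualized argument the paper intends when it says the proof is ``analogous'' to that of Proposition~\ref{Prop:LowerBound1}: compute $U(\cdot,\bar{\nu})$ rowwise, reduce the maximization over mixed $\mu$ to pure rows via the dual of Eq.~(\ref{Eq:MixedVersusPure}), and bound $\val(u)$ using the Minimax Theorem. No gaps.
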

	
\begin{proof}
Analogous to the proof of Proposition \ref{Prop:LowerBound1}.
\end{proof}	
	
\begin{proposition} 
\label{Prop:Equivalence:1}
For a balanced game $u$,
	\begin{enumerate}
		\item $\val(u)=\floor(u)=\ceil(u)$; and
		\item the strategy profile $(\bar{\mu},\bar{\nu})$ is an equilibrium in $u$.
	\end{enumerate}
\end{proposition}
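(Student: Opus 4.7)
The plan is to derive both claims directly from Propositions \ref{Prop:LowerBound1} and \ref{Prop:UpperBound1} together with a simple double-counting observation: in a balanced $m\times n$ game, counting the total number of $1$'s row-wise and column-wise must agree.

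First I would establish $\floor(u)=\ceil(u)$. Let $r=\tally{\row^u(i)}$ (common to every $i$, by row balance) and $c=\tally{\col^u(j)}$ (common to every $j$, by column balance). Then summing all entries of $u$ in two ways gives $mr=nc$, so $r/n=c/m$, i.e.\ $\ceil(u)=\floor(u)$. Combined with Proposition \ref{Prop:LowerBound1}(2) and Proposition \ref{Prop:UpperBound1}(2), this gives
\[
	\floor(u) \;\leq\; \val(u) \;\leq\; \ceil(u) \;=\; \floor(u),
\]
forcing equality throughout, which proves Claim (1).

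For Claim (2), I would verify that neither player can improve by unilateral deviation from $(\bar{\mu},\bar{\nu})$. For Eloise, observe that against $\bar{\nu}$ every pure strategy $i$ yields $U(i,\bar{\nu})=\tally{\row^u(i)}/n=r/n=\ceil(u)$ because $u$ is row balanced; hence every mixed strategy $\mu'$ yields $U(\mu',\bar{\nu})=\ceil(u)$, so in particular $U(\bar{\mu},\bar{\nu})=\ceil(u)=\val(u)$ and no deviation helps. For Abelard, apply Proposition \ref{Prop:LowerBound1}(1): for every mixed strategy $\nu'$ of Abelard,
\[
	U(\bar{\mu},\nu') \;\geq\; \min_{\nu} U(\bar{\mu},\nu) \;=\; \floor(u) \;=\; U(\bar{\mu},\bar{\nu}),
\]
where the last equality uses that $U(\bar{\mu},\bar{\nu})=\ceil(u)=\floor(u)$ by Claim (1). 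Thus Abelard also cannot benefit from deviating, and $(\bar{\mu},\bar{\nu})$ is an equilibrium.

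There isn't really a hard step here; the only thing to be careful about is the direction of the inequalities (Abelard is minimizing Eloise's payoff, so "no benefit from deviation" for him means $U(\bar{\mu},\nu')\geq U(\bar{\mu},\bar{\nu})$), and the observation that on a balanced game the uniform strategies pin down both $\min_\nu U(\bar{\mu},\nu)$ and $\max_\mu U(\mu,\bar{\nu})$ to the same common value $r/n=c/m$, which is precisely what makes $(\bar{\mu},\bar{\nu})$ work as an equilibrium.
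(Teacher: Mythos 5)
Your proof is correct and takes essentially the same approach as the paper: Claim (1) is the identical double-counting argument ($mr=nc$) combined with Propositions \ref{Prop:LowerBound1}(2) and \ref{Prop:UpperBound1}(2). For Claim (2) the paper instead sandwiches $U(\bar{\mu},\bar{\nu})$ between $\max_{\mu}U(\mu,\bar{\nu})$ and $\min_{\nu}U(\bar{\mu},\nu)$ using both parts (1) of the earlier propositions, whereas you verify Eloise's side directly by noting that row balance makes every pure strategy earn $r/n$ against $\bar{\nu}$ --- a cosmetic difference; both arguments rest on the same ingredients and your handling of the direction of Abelard's deviation inequality is correct.
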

	
\begin{proof}
Claim (1). Suppose $u$ is an $m\times n$ game. By  the fact that the game is column balanced it follows from Proposition \ref{Prop:LowerBound1} that the value of the game is at least 
	\[
		\floor(u)=\frac{\tally{\col^u(0)}}{m}.
	\]
	By  the fact that the game is row balanced it follows from Proposition \ref{Prop:UpperBound1} that the value of the game is at most
	\[
		\ceil(u)=\frac{\tally{\row^u(0)}}{n}.
	\]
	Since $u$ is row balanced, it has precisely $m\tally{\row^u(0)}$ entries with a $1$; since it is column balanced, it has precisely $n\tally{\col^u(0)}$ entries with a $1$. Hence, $m\tally{\row^u(0)} = n\tally{\col^u(0)}$, and it follows that the upper and lower bounds coincide, since we have that
	\[
		\frac{\tally{\row^u(0)}}{n} = \frac{\tally{\col^u(0)}}{m}.
	\]
The equality $\val(u)=\ceil(u)$ can be derived similarly.

Claim (2). Observe that:
	\[
		U(\bar{\mu},\bar{\nu}) 
		\leq 
		\max_{\mu} U(\mu,\bar{\nu}) 
		= 
		\min_\nu U(\bar{\mu},\nu) 
		\leq 
		U(\bar{\mu},\bar{\nu}).
	\]
The equality follows from Claim (1) and Propositions \ref{Prop:LowerBound1}.1 and \ref{Prop:UpperBound1}.1; the inequalities follow from the definition of $\max$ and $\min$, respectively. It follows that
	\[
		\max_{\mu} U(\mu,\bar{\nu}) 
		=
		U(\bar{\mu},\bar{\nu})
		= 
		\min_\nu U(\bar{\mu},\nu).
	\]

From this equality it follows that for every $\mu$ and $\nu$,
	\[
		U(\mu,\bar{\nu})
		\leq
		\max_{\mu} U(\mu,\bar{\nu}) 
		=
		U(\bar{\mu},\bar{\nu})
		= 
		\min_\nu U(\bar{\mu},\nu)
		\leq
		U(\bar{\mu},\nu),
	\]
and $(\bar{\mu},\bar{\nu})$ is an equilibrium of $u$, by definition.
\end{proof}

\begin{example}
\label{Ex:Prop:Equivalence:1}
Consider the $n \times n$ matrix $u$ with 1s on the diagonal:
\[
			\left[
			\begin{array}{cccccc}
			1 & 0 & \cdots & 0 & 0 \\
			0 & 1 &  & 0 & 0 \\[2mm]
			\vdots &  & \ddots &  & \vdots \\[3mm]
			0 & 0 &  & 1 & 0 \\
			0 & 0 & \cdots & 0 & 1
			\end{array} 
			\right].
\]
This matrix is obviously balanced. Whence, by Proposition \ref{Prop:Equivalence:1}, the value is $1/n$. Note that $u$ is isomorphic to games $\Gamma(\str{M},\phimp)$ of the Matching Pennies sentence on structures $\str{M}$ with $n$ elements.
\end{example}

A \emph{row submatrix} $u'$ of $u$ is any matrix that can be obtained by deleting any number of rows from $u$ (in any order). 
	
\begin{proposition}
\label{Prop:LowerBound2} 
For a game $u$,
	\[
		\max_{u'} \floor(u') \leq \val(u),
	\]
where $u'$ ranges over the nonempty row submatrices of $u$.
\end{proposition}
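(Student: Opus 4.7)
The plan is to generalize the proof of Proposition \ref{Prop:LowerBound1} by letting Eloise play a uniform mixed strategy whose support is restricted to the rows that survive in $u'$, rather than over all rows of $u$. Fixing a nonempty row submatrix $u'$ obtained by keeping exactly the rows indexed by some nonempty $I \subseteq \{0,\ldots,m-1\}$, I will define a mixed strategy $\mu^I$ of Eloise in the original game $u$ by $\mu^I(i) = 1/|I|$ for $i \in I$ and $\mu^I(i) = 0$ otherwise. This is clearly a legitimate mixed strategy in $u$.

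Next I will compute $U(\mu^I, j)$ for each pure strategy $j$ of Abelard. Since $\mu^I$ is supported on exactly the rows of $u'$, the product $\mu^I \col^u(j)$ equals $\tally{\col^{u'}(j)}/|I|$: the zero coefficients simply erase the deleted rows, so only the entries of the $j$th column of $u'$ contribute. Minimizing over $j$ then yields
\[
\min_{0 \leq j < n} U(\mu^I, j) = \frac{\colmin(u')}{|I|} = \floor(u').
\]
Invoking Eq.~(\ref{Eq:MixedVersusPure}), the same minimum is attained over all mixed strategies $\nu$ of Abelard, so $\min_\nu U(\mu^I, \nu) = \floor(u')$.

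Finally, by the Minimax Theorem (Theorem \ref{Th:Minimax}),
\[
\val(u) = \max_{\mu} \min_{\nu} U(\mu, \nu) \geq \min_{\nu} U(\mu^I, \nu) = \floor(u').
\]
Because this holds for an arbitrary nonempty row submatrix $u'$, taking the maximum over all such $u'$ gives the claimed inequality $\max_{u'} \floor(u') \leq \val(u)$.

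There is no real obstacle here; the only subtlety is to notice that deleting rows from $u$ does not remove strategies from Eloise's repertoire, it just corresponds to her choosing to play a mixed strategy with restricted support. The strengthening over Proposition \ref{Prop:LowerBound1} comes from the fact that restricting support may raise $\colmin$ relative to the number of rows being averaged, thus producing a sharper lower bound.
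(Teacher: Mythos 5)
Your proof is correct and follows essentially the same route as the paper's: both define Eloise's mixed strategy that is uniform on the rows retained in $u'$ and zero elsewhere, observe that its expected utility against Abelard's pure strategy $j$ is $\tally{\col^{u'}(j)}/m'$, and conclude that the minimum over $j$ equals $\floor(u')$, which Eloise can therefore secure. Your version is just slightly more explicit in citing Eq.~(\ref{Eq:MixedVersusPure}) and the Minimax Theorem for the final step.
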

	
\begin{proof}
Suppose $u$ is an $m \times n$ matrix. For a row submatrix $u'$ of $u$ with $m'$ rows, define the mixed strategy $\mu$ in the game of $u$ for which
	\[
		\mu(i)	 = \left\{ 
		\begin{array}{ll}
			1/m' 	& \textrm{if $\row^u(i)$ is also a row in $u'$} \\
			0 		& \textrm{otherwise.} \\
		\end{array} \right.
	\]
	The strategy $\mu$ plays all rows in $u'$ with equal probability, and does not play any of the rows in $u$ that do not sit in $u'$. Hence $\mu u$ yields
	\[
		\big[ \tally{\col^{u'}(0)}/m', \ldots, \tally{\col^{u'}(n-1)}/m' \big].
	\]
The minimal value in this vector equals $\floor(u')$. So Eloise can iterate through all row submatrices $u'$. Then, she can secure $\floor(u')$ by playing the mixed strategy associated with a $u'$ that maximizes $\floor(u')$.
\end{proof}	

\begin{proposition}
\label{Prop:Equivalence:2} 
Let $u'$ be a row submatrix of the game $u$. If $u'$ is balanced and
	\[
		\rowmax(u) = \rowmax(u'),
	\]
then 
	\begin{enumerate}
		\item $\val(u)=\val(u')$; and
		\item the strategy profile $(\bar{\mu},\bar{\nu})$ of Eloise's and Abelard's uniform strategies in $u'$ respectively, is an equilibrium in $u$.
	\end{enumerate}
\end{proposition}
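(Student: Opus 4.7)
The plan is to derive Claim~(1) by bracketing $\val(u)$ between a lower and an upper bound that the two hypotheses force to coincide, and then to verify Claim~(2) by showing that the uniform profile $(\bar{\mu},\bar{\nu})$ of the balanced submatrix $u'$ realizes both halves of the minimax relation inside the ambient game $u$.

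For Claim~(1), first apply Proposition~\ref{Prop:LowerBound2} with the witness $u'$ to obtain $\val(u) \geq \floor(u')$. Since $u'$ is balanced, Proposition~\ref{Prop:Equivalence:1}.1 gives $\val(u')=\floor(u')=\ceil(u')$, and the lower bound reads $\val(u) \geq \val(u')$. For the matching upper bound, apply Proposition~\ref{Prop:UpperBound1}.2 to $u$ to get $\val(u) \leq \ceil(u) = \rowmax(u)/n$; the assumption $\rowmax(u)=\rowmax(u')$ then collapses this to $\ceil(u)=\ceil(u')=\val(u')$, finishing Claim~(1).

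For Claim~(2), let $m'$ be the number of rows in $u'$, let $\bar{\mu}$ be the Eloise strategy on $u$ that assigns $1/m'$ to each row belonging to $u'$ and $0$ elsewhere, and let $\bar{\nu}$ be the uniform strategy over the $n$ columns (which are common to $u$ and $u'$). Since $\bar{\mu}$ is supported on the rows of $u'$, the value $U(\bar{\mu},\bar{\nu})$ computed in $u$ agrees with its value computed in $u'$, which by Proposition~\ref{Prop:Equivalence:1}.2 equals $\val(u')$. I would then verify the two defining equilibrium inequalities by establishing matching extremal identities: on Eloise's side, Proposition~\ref{Prop:UpperBound1}.1 gives $\max_\mu U(\mu,\bar{\nu}) = \ceil(u)$, and the hypothesis on $\rowmax$ together with Claim~(1) equates this with $U(\bar{\mu},\bar{\nu})$; on Abelard's side, Proposition~\ref{Prop:LowerBound1}.1 applied to $u'$ (using that $\bar{\mu}$, restricted to the rows of $u'$, is the uniform strategy there) gives $\min_\nu U(\bar{\mu},\nu) = \floor(u') = \val(u') = U(\bar{\mu},\bar{\nu})$. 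From these equalities the inequalities $U(\mu,\bar{\nu}) \leq U(\bar{\mu},\bar{\nu}) \leq U(\bar{\mu},\nu)$ are immediate, exactly as in the final display of the proof of Proposition~\ref{Prop:Equivalence:1}.

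The main obstacle is really just bookkeeping: one must keep straight that $\bar{\mu}$ is a distribution over the rows of $u$ (supported on the rows coming from $u'$) rather than a strategy in a different game, and observe that Eloise's expected payoff against any column strategy $\nu$ therefore depends only on the $u'$-portion of $u$. Once this identification is in place, both claims reduce to chasing equalities among $\val(u')$, $\floor(u')$, $\ceil(u')$, $\floor(u)$, and $\ceil(u)$ and invoking the three preceding propositions.
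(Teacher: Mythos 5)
Your proposal is correct and follows essentially the same route as the paper: Claim~(1) is the identical sandwich $\val(u') = \floor(u') \leq \val(u) \leq \ceil(u) = \ceil(u') = \val(u')$, and Claim~(2) rests on the same two observations, namely that $\bar{\mu}$ is supported on the rows of $u'$ (so Abelard's side reduces to the equilibrium/floor computation in $u'$) and that those rows all attain the maximal row sum $\rowmax(u)=\rowmax(u')$ (so $\bar{\mu}$ is a best response to $\bar{\nu}$ in $u$). The only cosmetic difference is that you package Eloise's side by citing Proposition~\ref{Prop:UpperBound1}.1 and chasing equalities through $\ceil(u)=\ceil(u')=\val(u')$, where the paper instead inspects the vector $u\bar{\nu}$ directly; both amount to the same calculation.
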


\begin{proof}
Claim (1). By Propositions \ref{Prop:LowerBound2} and \ref{Prop:Equivalence:1}.1,
\[
	\val(u') \leq \val(u),
\]
and by Propositions \ref{Prop:UpperBound1}.2,
\[
	\val(u) \leq \ceil(u).
\]
Since $u'$ is balanced, it follows from Proposition \ref{Prop:Equivalence:1}.1 that 
\[
	\floor(u') = \val(u') = \ceil(u').
\]
Since $\rowmax(u) = \rowmax(u')$, we have that $\ceil(u) = \ceil(u')$. Hence
\[
	\val(u') = \val(u).
\]

Claim (2). By Proposition \ref{Prop:Equivalence:1}.2, the pair of uniform strategies $(\bar{\mu},\bar{\nu})$ is an equilibrium in $u'$, whence for every mixed strategy $\nu$ in $u'$,
	\[
		U'(\bar{\mu},\bar{\nu}) \leq U'(\bar{\mu},\nu),
	\]
where $U'$ is the expected utility function of $u'$. Since $u'$ is a submatrix of $u$, $U'$ and $U$ agree on every pair of mixed strategies in the domain of $U'$. Since $u'$ is a \emph{row} submatrix of $u$, Abelard's set of strategies in the two games coincide. Therefore, the latter inequality boils down to
	\[
		U(\bar{\mu},\bar{\nu}) \leq U(\bar{\mu},\nu),
	\]
for every mixed strategy $\nu$ of Abelard in $u$.	Remark that strictly speaking, in this inequality, $\bar{\mu}$ denotes Eloise's mixed strategy in $u$ that is uniform in the strategies that are shared between $u$ and $u'$.

If Abelard plays $\bar{\nu}$, Eloise can secure the maximal element in $u \bar{\nu}$. For the transposition of this vector write
	\[
		\left[ 
			w_0, \ldots, w_{m'-1} , w_{m'} , \ldots, w_{m-1}
		\right],
	\]
assuming $u$ is an $m \times n$ game and $u'$ is an $m' \times n$ game, with $m' \leq m$. Since $u'$ is balanced all its rows have the same sum, namely  $\rowmax(u')=\rowmax(u)$. Thus,
	\[
		\rowmax(u)/n = w_0 = \ldots = w_{m'-1}.
	\]
Here we assume that the first $m'$ rows in $u$ constitute $u'$, which obviously goes without loss of generality. Furthermore, we may assume that
	\[
	w_{m'-1} \geq w_{m'} \geq \ldots \geq w_{m-1}.
	\]
Since $\bar{\mu}$ only assigns non-zero probabilities to the first $m'$ strategies, no mixed strategy $\mu$ of Eloise in $u$ can outperform $\bar{\mu}$, given that Abelard plays $\bar{\nu}$:
	\[
		U(\mu, \bar{\nu}) \leq U(\bar{\mu},\bar{\nu}).
	\]
It follows that $(\bar{\mu},\bar{\nu})$ is an equilibrium in $u$.
\end{proof}

\section{Applications}
\label{Sec:Applications}

We give two applications of the tools developed to equilibrium semantics.

\subsection{Birthday problem}
\label{Ssec:BirthdayProblem}

Considered is a party attended by $m$ persons. What is the probability that there is a pair of individuals that have the same birthday? A straightforward combinatorial argument shows that the probability exceeds 50 per cent when $m > 20$.

The ``birthday problem'' can be redefined in terms of drawing $m$ balls (number of guests) from an urn of $n$ balls (number of birthdays) with replacement. We are interested in the odds that we draw the same ball twice.

Given the first ball $b_0$, the probability that the second ball $b_1$ is not equal to $b_0$ is 
	\[
		\frac{n-1}{n}.
	\]
Similarly, given $i$ distinct balls $b_0,\ldots,b_{i-1}$, the probability that the next ball is not among the balls drawn earlier is 
	\[
		\left( \frac{n}{n} \right)
		\left( \frac{n-1}{n} \right)
		\left( \frac{n-2}{n} \right) 
		\ldots 
		\left( \frac{n-i}{n} \right).
	\]
It follows that the odds that $b_0,\ldots,b_{m-1}$ are all distinct is
	\begin{equation}
	\label{Eq:BirthdayEquation}
		\frac{n!}{n^m(n-m)!}.
	\end{equation}

We define the process of randomly drawing $m$ balls from the urn in IF logic. Consider the IF sentence
	\[
		\phi_m = \forall x_0 \ldots (\forall x_{m-1}/X_{m-1}) (\exists x_m / X_m) \ldots (\exists x_{2m-1}/X_{2m-1}) \psi_m
	\]
where $X_k = \{x_0,\ldots, x_{k-1}\}$ and 
	\[
 		\psi_m = \bigvee_{0 \leq i < m} \ \bigvee_{i < j < m} (x_i + x_{i+m}) = (x_j + x_{j+m}),
 	\]
in which the addition operator is defined as $a_k + a_l = a_{k + l mod n}$ assuming that the $n$ objects in the domain at hand are labeled $a_1,\ldots,a_{n-1}$. In an extensive game of $\phi_m$, Abelard and Eloise pick $m$ objects each. Abelard's first object $a_0$ is added to Eloise's first object $a_m$, and so on for the other $m-1$ objects. Since Eloise does not know any of Abelard's choices, the object $b_i = a_i + a_{m+i}$ is effectively chosen at random. Eloise wins if there is a pair of sums $b_i=b_j$ with $i<j$; otherwise Abelard wins. 

\begin{proposition}
\label{Prop:BirthdayProblem}
The probability that a random sample of $m$ elements from a set of $n$ elements (with replacement) contains at least one pair of duplicates is $\val(\str{M},\phi_m)$ for any structure $\str{M}$ of size $n$ in which the addition operator is defined as above.
\end{proposition}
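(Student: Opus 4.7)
The plan is to reduce the computation to Proposition~\ref{Prop:Equivalence:1} by showing that the payoff matrix of $\Gamma(\str{M},\phi_m)$ is balanced.

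First I would identify the pure strategies. Every universal quantifier $(\forall x_k / X_k)$ with $k\geq 1$ forbids dependence on all previously quantified variables, and similarly for Eloise's existentials $(\exists x_k / X_k)$ with $m \leq k < 2m$. Consequently, each of Abelard's pure strategies is just a tuple $a = (a_0, \ldots, a_{m-1}) \in M^m$, assigning a constant value to each of his variables $x_0, \ldots, x_{m-1}$; likewise every pure strategy of Eloise is a tuple $c=(c_0,\ldots,c_{m-1})\in M^m$, assigning values to $x_m, \ldots, x_{2m-1}$. The payoff matrix $u$ is thus $n^m \times n^m$ and satisfies $u(a,c) = 1$ iff the sequence $(a_0+c_0, \ldots, a_{m-1}+c_{m-1})$ (where $+$ is the addition operator defined on $M$) contains a repetition.

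Next I would verify that $u$ is balanced. Fix any $a \in M^m$. Since addition on $M$ induces a group structure, the map $c \mapsto (a_0+c_0,\ldots,a_{m-1}+c_{m-1})$ is a bijection on $M^m$. Hence $\tally{\row^u(a)}$ equals the number of tuples $b \in M^m$ with $b_i = b_j$ for some $i < j < m$, a quantity independent of $a$. So $u$ is row balanced. By the symmetric argument (fix $c$ and let $a$ vary), $u$ is column balanced.

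Finally I would invoke Proposition~\ref{Prop:Equivalence:1}: since $u$ is balanced, $\val(u) = \ceil(u) = \rowmax(u)/n^m$. The number of $m$-tuples in $M^m$ with at least one repeated element equals $n^m - n(n-1)\cdots(n-m+1) = n^m - \frac{n!}{(n-m)!}$, so
\[
    \val(\str{M},\phi_m) \;=\; \frac{n^m - \frac{n!}{(n-m)!}}{n^m} \;=\; 1 - \frac{n!}{n^m(n-m)!},
\]
which is exactly $1$ minus expression~(\ref{Eq:BirthdayEquation}), i.e., the probability that the sample contains at least one duplicate. The only delicate step is the balancedness argument: it is crucial that the winning condition is phrased via the sums $x_i + x_{i+m}$ rather than via direct comparisons, for it is precisely the bijectivity of componentwise addition that makes each row sum (and each column sum) equal to the fixed count of tuples with a repeat.
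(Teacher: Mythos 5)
Your overall route is the same as the paper's: identify the (reduced) strategy spaces with $M^m$ for each player, show the resulting payoff matrix is balanced, and invoke Proposition~\ref{Prop:Equivalence:1}. Where you differ is in how balancedness is established. The paper counts explicitly, fixing Abelard's tuple and building Eloise's reply coordinate by coordinate ($n-i$ choices avoid the first $i$ distinct sums), arriving at $n!/(n-m)!$ all-distinct replies per column; you instead observe that componentwise addition by a fixed tuple is a bijection of $M^m$, so every row sum (and every column sum) equals the fixed number of $m$-tuples of $M$ containing a repeat. Your bijection argument is cleaner, handles rows and columns symmetrically in one stroke, and makes explicit the point you flag at the end --- that the group structure of $+$ is what makes the construction work --- which the paper leaves implicit in its counting.

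One step is missing, however. A pure strategy of Eloise in $G(\str{M},\phi_m)$ is not just a tuple in $M^m$: after the quantifier moves she must also resolve the disjunctions in $\psi_m$, and those choices are part of her strategy, so the payoff matrix of $\Gamma(\str{M},\phi_m)$ is not literally $n^m\times n^m$ as you assert. The paper bridges this by observing that Eloise has an optimal substrategy for the disjunctions (choose a true disjunct whenever one exists) and that weakly dominated and payoff-equivalent strategies can be eliminated without changing the value of the game \cite[Proposition 7.25]{MannEtAl:11}; only after this reduction do her strategies collapse to tuples. You need that reduction, or some equivalent remark, before your description of $u$ is correct. Finally, your value $1-\frac{n!}{n^m(n-m)!}$ is the probability of at least one duplicate, which is what the proposition asserts and what Eloise's winning condition demands ($\psi_m$ holds iff some pair of sums coincides); note that the paper's own closing line instead identifies the value with~(\ref{Eq:BirthdayEquation}), the complementary all-distinct probability, so on this point your bookkeeping is the one consistent with the statement being proved.
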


\begin{proof}
Both Abelard and Eloise have $n^m$ choices for their respective quantifiers. Then, Eloise has two disjunction choices knowing Abelard's and her own moves (or more if we permit ourselves only binary disjunctions). Given this knowledge, she has an ``optimal substrategy'':  chose the first disjunct that holds with respect to the chosen objects, if such a disjunct exists; otherwise, select an arbitrary disjunct. It is clear that this strategy outperforms or is equivalent to any other substrategy she may have, given the objects selected for the objects. It has been shown that we can remove such ``weakly dominated'' and ``payoff equivalent'' strategies from the strategic game, without affecting the game's value \cite[Proposition 7.25]{MannEtAl:11}. 

Thus we can focus on the game that is the result of eliminating weakly dominated and payoff equivalent strategies. In this reduced game, each of Abelard's and Eloise's strategies corresponds to an ordered series of $n$ objects from $M$. So, each player has $n^m$ strategies. Consider any strategy $(a_0,\ldots,a_{m-1})$ of Abelard. Let us count the number of strategies $(a_m,\ldots,a_{2m-1})$ of Eloise against which Abelard's strategy results in a loss for Eloise. The object $a_m$ can be chosen in any way we want, yielding $n$ choices. Write $b_i$ for $a_i + a_{m + i}$. Given a series of $i$ distinct objects $b_0,\ldots,b_{i-1}$, there are $n-i$ objects $a_{m+i}$ for which 
\[
	a_i + a_{m+i} \notin \{ b_0 \ldots, b_{i-1} \}.
\]
So every strategy of Abelard loses against $n(n-1)(n-2)\ldots(n-m) = n!/(n-m)!$ strategies of Eloise, and wins against
\begin{equation}
\label{Eq:Birthday:Floor}
	n^m - \frac{n!}{(n-m)!}
\end{equation}
strategies. The strategy of Abelard was chosen without loss of generality; it follows that $u$ is row balanced and that the value (\ref{Eq:Birthday:Floor}) equals $n^m - \colmin(u)$, where $u$ is the matrix of the reduced strategic game of $\phi_m$ on $\str{M}$. Since $u$ has $n^m$ columns, 
\[
	\floor(u)
	=
	\frac{\colmin(u)}{n^m}
	=
	\frac{n!}{n^m(n-m)!}
	=
	(\ref{Eq:BirthdayEquation}).
\]

A symmetric argument shows that $u$ is also row balanced. By Proposition \ref{Prop:Equivalence:1} it follows that the value of $\phi_m$ on $\str{M}$ is (\ref{Eq:BirthdayEquation}).
\end{proof}

What is at stake in the birthday problem is the number of \emph{pairs} of party goers $m(m-m)/2$, which is quadratic in $m$. This is reflected in the IF sentence $\phi_m$, which has $2m$ quantifiers to simulate the random selection of $m$ objects. The quantifier-free prefix $\psi_m$ has $m(m-1)/2$ disjuncts, one for each pair of distinct party goers.

\subsection{Universal hashing}
\label{Ssec:UniversalHashing}

In this section we use IF logic to describe the game theory behind \emph{universal hashing}, a notion from computer science. Hashing functions are used to map an unknown set $S$ from a set of objects $U$ called \emph{keys}, to a set of \emph{hash values} $V$. If we have a linear order on $V$, then we can use a hash function to store the elements from $S$ in a hash table, which allows for binary search.

For instance, we can think of $U$ as the collection of all finite strings with at most 100 characters, $S$ as the set of Dutch names, and $V$ as some range of integers in an administration system. Surely, we do not want to reserve as many integers as there are elements in $U$; a hash function helps us transfer an arbitrary key from $U$ to a hash value in $V$.

By the pigeon hole principle, if $S$ has more elements than $V$, for every hash function, there is a pair of keys $k,l \in U$ that are mapped to the same hash value. Such a pair of objects is said to \emph{collide}. Collision handling is typically resource intensive, for which reason we want to select a hash function that minimizes the expected number of collisions not knowing the set $S$ of keys that will actually materialize. The following fragment explains hashing as a game.

\begin{quotation}
``If a malicious adversary chooses the keys to be hashed by some fixed hash function, then he can choose $n$ keys that all hash to the same slot, yielding an average retrieval time [that is linear in $n$]. Any fixed hash function is vulnerable to such terrible worst-case behavior; the only effective way to improve the situation is to choose the hash function \emph{randomly} in a way that is \emph{independent} of the keys that are actually going to be stored. This approach, called \emph{universal hashing}, can yield provably good performance on average, no matter what keys are chosen by the adversary.

The main idea behind universal hashing is to select the hash function at random from a carefully designed class of functions at the beginning of execution. [\ldots] Poor performance occurs only when the compiler chooses a random hash function that causes the set of identifiers to hash poorly, but the probability of this situation occurring is small and is the same for any set of identifiers of the same size.'' \cite[pp.\ 232--3]{CormenEtAl:03}
\end{quotation}

We will work on \emph{hash structures} 
	\[
		\str{M}=\big(M,U^\str{M},(f^\str{M}_i)_{0 \leq i < n}\big),
	\] 
where $U^\str{M} \subseteq M$ are the keys, $M - U^\str{M}$ the hash values, $(f^\str{M}_i)_{i}$ is the series of all functions from $U^\str{M}$ to $M-U^\str{M}$ and $i$ is an injective indexing of thereof. The assumption that a hash structure records all possible hash functions with respect to the set of keys and values at hand is very strong. We will return to this assumption before we leave this section.

The game dynamics described in the first part of the first paragraph are captured by:
\[
	\phi_\mathrm{H} 
	= 
	\bigvee_i \forall x \forall y \Big[ \big(U(x) \land U(y) \land x \neq y \big) \to f_i(x) \neq f_i(y) \Big],
\]
in which the operator $\bigvee_i$ is object language.

In a game of $\phi_\mathrm{H}$ on a hash structure $\str{M}$, triggered by $\bigvee_i$ Eloise chooses the index $i$ of the hash function $f_i^\str{M}$. Then, Abelard in the capacity of malicious adversary chooses two keys for $x$ and $y$. If they collide with respect to $f_i^\str{M}$, Abelard wins. As we pointed out above, by the pigeonhole principle, Abelard has a winning strategy whenever $|U| > |V|$.

As explained in the remainder of the quotation, in the universal hashing scenario, Eloise tries to confuse Abelard by drawing her hash function at random. We will show that the optimal way to randomly select a hash function coincides with Eloise's equilibrium strategy in the game described by
\[
	\phi_\mathrm{UH} = \bigvee_i (\forall x/i) (\forall y/i) \Big[ \big(U(x) \land U(y) \land x \neq y \big) \to f_i(x) \neq f_i(y) \Big].
\]

Assume $U=\{k_0,\ldots,k_{n-1}\}$ and $V = \{0,\ldots,m-1\}$. For a function $f:U \to V$, let $\mathcal{P}_f$ be the set of its pre-images:
	\[
		\mathcal{P}_f = \{ f^{-1}(v) : v \in V \}.
	\] 

The \emph{degree} of a function $f$ is the difference between the sizes of the largest and smallest pre-image in $\mathcal{P}_f$:
	\[
		\max\big\{ |P| : P \in \mathcal{P}_f \big\} - \min\big\{ |P| : P \in \mathcal{P}_f \big\}.
	\]
For every $U$ and $V$ there is a function $f:U\to V$ of degree (at most) $1$, see for instance the function
	\[
		f(k_i) = i \bmod{m}.
	\]
This function has in fact degree $0$ whenever $n \bmod{m} = 0$. 

In the context of the game $\Gamma(\str{M},\phi_{\mathrm{UH}})$, $S_d$ denotes the set of strategies of Eloise that pick indices $i$ for which $f_i^\str{M}$ has degree $d$. Each strategy of Abelard corresponds to the pair of keys $(k,l)$ it assigns to $x$ and $y$. We write $T^*$ for the set of strategies of Abelard that assign distinct keys to $x$ and $y$.

\begin{proposition}
Let $d = \min\{1,n \bmod{m}\}$ and $S^* = S_d$. The pair of uniform strategies $(\bar{\mu},\bar{\nu})$ over $S^*$ and $T^*$ respectively is an equilibrium in $\Gamma=\Gamma(\str{M},\phi_{\mathrm{UH}})$.
\end{proposition}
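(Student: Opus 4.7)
The plan is to apply Proposition \ref{Prop:Equivalence:2} to a suitable reduction of $\Gamma = \Gamma(\str{M},\phi_{\mathrm{UH}})$. Write $u$ for the payoff matrix of $\Gamma$, with rows indexed by hash-function indices $i$ and columns by pairs $(k,l) \in (U^\str{M})^2$; by inspection $u(i,(k,l))=1$ iff $k=l$ or $f_i^\str{M}(k) \neq f_i^\str{M}(l)$, so the columns with $k=l$ are constantly $1$. I would first establish that $(\bar\mu,\bar\nu)$ is an equilibrium in the reduced game $\Gamma^-$ obtained by deleting those columns (so Abelard's strategy set becomes $T^*$), and only then lift the equilibrium back to $\Gamma$.

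Within $\Gamma^-$, I would take $u'$ to be the row submatrix indexed by $S^* = S_d$ and verify that $u'$ is balanced. For row-balance, the definition $d = \min\{1,n \bmod m\}$ pins down the multiset of pre-image sizes: writing $n = qm + r$ with $0 \leq r < m$, every $f \in S^*$ has exactly $r$ pre-images of size $q+1$ and $m-r$ of size $q$, so the row sum $n(n-1) - \sum_j s_j(s_j-1)$ is the same constant for all $f \in S^*$. For column-balance, the natural action of $\mathrm{Sym}(U^\str{M})$ on $U^\str{M}$ induces a payoff-preserving bijection on indices via $f_i \mapsto f_i \circ \pi^{-1}$ (which preserves $S^*$, since degree is invariant under renaming keys) and acts transitively on $T^*$; these combined actions force every column of $u'$ to share the same sum. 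The hypothesis $\rowmax(\Gamma^-) = \rowmax(u')$ then follows from the convexity of $s \mapsto s(s-1)$: the $\Gamma^-$-row sum of $f_i$ equals $n(n-1) - \sum_j s_j(s_j-1)$, and this is maximized exactly on the degree-$d$ functions, i.e., on $S^*$. Proposition \ref{Prop:Equivalence:2} then delivers $(\bar\mu,\bar\nu)$ as an equilibrium of $\Gamma^-$.

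To transfer the equilibrium to $\Gamma$, I would verify both equilibrium inequalities directly. Since $\bar\nu$ is supported in $T^*$, the deleted columns do not contribute to $U(\mu,\bar\nu)$ for any Eloise-strategy $\mu$, so $U(\mu,\bar\nu)$ agrees with the $\Gamma^-$-utility, whence $U(\mu,\bar\nu) \leq U(\bar\mu,\bar\nu)$ by the $\Gamma^-$-equilibrium. For Abelard, any $\nu$ in $\Gamma$ decomposes as $\alpha\nu_1 + (1-\alpha)\nu_0$ with $\nu_1$ supported in $T^*$ and $\nu_0$ on the $k=l$ columns; the $\Gamma^-$-equilibrium gives $U(\bar\mu,\nu_1) \geq U(\bar\mu,\bar\nu)$, while $U(\bar\mu,\nu_0) = 1 \geq U(\bar\mu,\bar\nu)$, and taking the convex combination yields $U(\bar\mu,\nu) \geq U(\bar\mu,\bar\nu)$.

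The main obstacle I anticipate is the bookkeeping around the action of $\mathrm{Sym}(U^\str{M})$ on the indexing set of hash functions: making this fully rigorous requires observing that the paper's assumption of an injective indexing of \emph{all} functions $U^\str{M} \to M \setminus U^\str{M}$ is exactly what lets a permutation $\pi$ of $U^\str{M}$ push forward to a bijection of $S$ that restricts to $S^*$ and is compatible with the transitive action on $T^*$. Once this is set up, the row-balance, column-balance, and $\rowmax$-equality claims are essentially one-line computations, and the Eloise-uniformity-over-$S^*$ prescription is pinned down by the convexity minimization.
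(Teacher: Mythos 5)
Your proposal is correct and follows the same overall skeleton as the paper's proof: restrict Abelard to $T^*$, exhibit the degree-$d$ rows as a balanced row submatrix attaining $\rowmax$, and invoke Proposition \ref{Prop:Equivalence:2}.2. The differences are in how the key combinatorial facts are established, and your version is the tighter of the two. Where you observe that the row sum is $n(n-1)-\sum_j s_j(s_j-1)$ and appeal to convexity of $s\mapsto s(s-1)$ over integer partitions of $n$ into $m$ parts to conclude that exactly the degree-$d$ functions maximize it, the paper instead builds an explicit leveling operator $\lambda$ that moves one key from a largest pre-image to a smallest one, shows each application strictly reduces the collision count while $x>y+1$, and iterates; these are two presentations of the same fact (balanced partitions minimize $\sum_j s_j^2$), with your convexity argument being more direct and the paper's $\lambda$-iteration leaving the termination and monotonicity bookkeeping ``as an exercise.'' You also supply two arguments the paper only asserts: column balance, which the paper calls ``easy to see'' and you derive from the payoff-preserving, degree-preserving action of $\mathrm{Sym}(U^{\str{M}})$ acting $2$-transitively on $T^*$; and the transfer of the equilibrium from the column-reduced game back to $\Gamma$, which the paper delegates to the weak-dominance elimination result of \cite{MannEtAl:11} but which you verify directly by splitting any $\nu$ into its mass on $T^*$ and its mass on the constant-$1$ columns. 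Both routes are sound; yours buys self-containedness and a cleaner extremal argument, the paper's buys a constructive picture of \emph{why} leveling a hash function helps Eloise.
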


\begin{proof}
Every strategy of Abelard that does not sit in $T^*$ is losing for Abelard, and therefore weakly dominated by every strategy in $T^*$. Whence, we may discard these dominated strategies from our analysis, see \cite[Chapter 7]{MannEtAl:11}. $T^*$ contains $n^2 - n$ strategies.

We consider a function $\lambda$ on the set of functions of type $U \to V$. Let $f$ be one particular function of this type. Write $i_{\max}^f$ for a value with largest pre-image: 
\[
	i_{\max}^f = \argmax_i \big\{ |f(i)^{-1}| \big\}.
\]
Let $k^*$ be any element in $f(i_{\max}^f)^{-1}$. Similarly, write $i_{\min}^f$ for a value with smallest pre-image: 
\[
	i_{\min}^f = \argmin_i \big\{ |f(i)^{-1}| \big\}.
\]
Then, $\lambda$ sends $f$ to the function $\lambda(f)$ for which
	\[
		\lambda(f)(k) 
		= 
		\left\{ 
			\begin{array}{ll}
				i_{\min}^f & \textrm{if $k = k^*$} \\
				f(k) & \textrm{otherwise.}
			\end{array}
		\right.
	\]
If a collision occurs, it is more likely to happen between keys that are sent to $i_{\max}^f$ than to keys sent to $i_{\min}^f$. The operator $\lambda$ levels the probability that a collision occurs at $i_{\max}^f$ and the probability that one occurs at $i_{\min}^f$. We will see that it also decreases the likelihood of a collision appearing in the first place.

Write $\sigma$ and $\sigma'$ for Eloise's strategies in $\Gamma$ that pick the indices of $f$ and $f'=\lambda(f)$, respectively. We show that $\sigma$ loses against more strategies of Abelard than $\sigma'$. Eloise's strategy $\sigma$ loses against Abelard's strategies in 
	\[
		L_\sigma = \bigcup_{P \in \mathcal{P}_f} L_P.
	\]
where
	\[
		L_P = \big\{ (k,l) \in T^* : k,l \in P \big\}.
	\]
Similarly, $\sigma'$ loses against the strategies in 
	\(
		L_{\sigma'} = \bigcup_{P \in \mathcal{P}_{f'}} L_P
	\).
To show that $|L_\sigma| > |L_{\sigma'}|$, it suffices to show that 
	\begin{equation}
	\label{Eq:UniversalHashing:100}
		\big|L_{f(i_{\max}^f)^{-1}}\big| + \big|L_{f(i_{\min}^f)^{-1}}\big|
		> 
		\big|L_{f'(i_{\max}^f)^{-1}}\big| + \big|L_{f'(i_{\min}^f)^{-1}}\big|,
	\end{equation}
because the other pre-images are shared between $f$ and $f'$. For a pre-image $P$ of size $z$, $L_{P}$ contains $z(z-1)$ elements. Whence, if $f(i_{\min}^f)^{-1}$ contains $x$ elements, 
	\begin{align*}
		\big|L_{f(i_{\max}^f)^{-1}}\big| & = x(x-1) \\
		\big|L_{f'(i_{\max}^f)^{-1}}\big| & = (x-1)(x-2).
	\end{align*}
Likewise, if $f(i_{\min}^f)^{-1}$ contains $y$ elements, 
	\begin{align*}
		\big|L_{f(i_{\min}^f)^{-1}}\big| & = y(y-1) \\
		\big|L_{f'(i_{\min}^f)^{-1}}\big| & = (y+1)y.
	\end{align*}
Hence, Eq.~(\ref{Eq:UniversalHashing:100}) reduces to 
	\[
		x(x-1) + y(y-1) > (x-1)(x-2) + (y+1)y,
	\]
which is the case if $x > y + 1$.

We leave it as an exercise to the reader to verify that for any function $f$ of degree $d'>1$, there is a finite series
	\[
		f,\lambda(f),\lambda\big(\lambda(f)\big),\ldots, \lambda\big(\ldots\lambda\big(\lambda(f)\big)\ldots\big)
	\]
of which the final element has degree $d'-1$. Thus, iterative application of $\lambda$ ultimately yields a function with degree $d$. As we have just shown above, every application of $\lambda$ yields a hash function that further reduces the number of strategies against which the strategy loses that picks the index of that hash function. If we reach a function with degree $d$, applying $\lambda$ no longer reduces this number. It can further be checked that as long as $\lambda$ can be applied, $x > y + 1$.

It is easy to see that every two functions with degree $d$ suffer from an equal number of collisions. Let $\Gamma^*$ be the subgame of $\Gamma$ induced by $S^*$ and $T^*$, that is, $\Gamma^*$ is of the form $(S^*,T^*,u^*)$ where $u^*(\sigma,\tau)=u(\sigma,\tau)$, for any $\sigma \in S^*$ and $\tau \in T^*$.

Our $\lambda$-argument showed that $\rowmax(u^*)=\rowmax(u)$. Since every two functions of degree $d$ suffer from the same number of collisions, $u^*$ is row balanced. It is easy to see that $u^*$ is also column balanced. We apply Proposition \ref{Prop:Equivalence:2}.2 to infer that $(\bar{\mu},\bar{\nu})$ is an equilibrium in $\Gamma$.
\end{proof}

We have seen in the section on the birthday problem, Section \ref{Ssec:BirthdayProblem}, that we can simulate drawing random objects, but we were incapable of extending this method to drawing random functions. It appears to us that if we have a means to express randomization over functions, we can utilize this mechanism to express universal hashing without assuming structures that carry all possible hash structures. 

If it turns out that IF logic can express random functions, or if it turns out that it cannot, we may be able to use this to prove new upper or lower bounds on the expressive power of IF under equilibrium semantics. In fact, if the former is the case --- i.e., IF logic can express random functions --- it would be most interesting to see if it has natural fragments that coincide with randomized complexity classes in the style of Fagin's Theorem, which showed that second-order existential logic coincides with the complexity class NP \cite{Fagin:74}.
 
\section{Acknowledgements}

We gratefully acknowledge Fausto Barbero for careful proofreading.

\end{document}